\newtheorem{theorem}{Theorem}[section]
\newtheorem{lemma}[theorem]{Lemma}
\newtheorem{proposition}[theorem]{Proposition}
\DeclareMathOperator{\vol}{\mathbf{vol}}
\begin{document}
	\title{Existence of Infinite Product Measures} 
	\author{Juan Carlos Sampedro} \thanks{The author has been supported by the Research Grant PGC2018-097104-B-I00 of the Spanish Ministry of Science, Technology and Universities and by the PhD Grant PRE2019\_1\_0220 of the Basque Country Government.}
	\address{Institute of Interdisciplinary Mathematics \\
		Department of Mathematical Analysis and Applied Mathematics \\
		Complutense University of Madrid \\
		28040-Madrid \\
		Spain.}
	\email{juancsam@ucm.es}

\begin{abstract}
		A construction of product measures is given for an arbitrary sequence of measure spaces via outer measure techniques without imposing any condition on the underlying measure spaces. This result generalizes the ones given up to date.
\end{abstract}

\keywords{Infinite Dimensional Integration, Infinite Product Measures, Caratheodory Extension Theorem}
\subjclass[2010]{28A35 (primary), 28C20 (secondary)}

\maketitle

\section{Introduction}

In this article, we go back to a central issue in the foundational period of the measure theory in the beginning of the 20th century, whose importance has been declining due to the production of certain partial results sufficienty general for its application. We talk about the construction of product measures on infinite product spaces. We start by presenting in a compact way the results obtained up to date.
\par 
The classical theory of product measures deals with two measure spaces $(X,\Sigma_{X},\mu_{X})$ and $(Y,\Sigma_{Y},\mu_{Y})$ in order to construct the product measure space $(X\times Y,\Sigma_{X}\otimes \Sigma_{Y},\mu_{X}\otimes\mu_{Y})$, where $\Sigma_{X}\otimes \Sigma_{Y}$ is the $\sigma$-algebra generated by $\mathcal{R}_{X\times Y}:=\left\{A\times B : A\in \Sigma_{X}, \ B\in\Sigma_{Y}\right\}$ and $\mu_{X}\otimes\mu_{Y}$ is a measure on $\Sigma_{X}\otimes \Sigma_{Y}$ satisfying the identity
\begin{equation}\label{E1}
(\mu_{X}\otimes\mu_{Y})(A\times B)=\mu_{X}(A)\cdot\mu_{Y}(B) \text{ for every } A\in\Sigma_{X}, B\in\Sigma_{Y}.
\end{equation}
\noindent The most common method to prove the existence of this measure is through the celebrated Caratheodory extension theorem as follows. Denote by $\mathcal{U}(\mathcal{R}_{X\times Y})$ the family of finite unions of elements of $\mathcal{R}_{X\times Y}$. It is easy to verify that $\mathcal{U}(\mathcal{R}_{X\times Y})$ is an algebra of subsets of $X\times Y$ and that every element of $\mathcal{U}(\mathcal{R}_{X\times Y})$ can be written as a finite union of pairwise disjoint members of $\mathcal{R}_{X\times Y}$. Define the set function
\begin{equation*}
\mu_{0}: \mathcal{U}(\mathcal{R}_{X\times Y}) \longrightarrow [0,\infty], \quad
 \biguplus_{i=1}^{N}A_{i}\times B_{i} \mapsto \sum_{i=1}^{N}\mu_{X}(A_{i})\cdot\mu_{Y}(B_{i}).
\end{equation*}
\noindent Along this paper, the notation $\uplus$ will mean that the union is a union of pairwise disjoint sets. It is classical (see e.g. \cite{B,F}), that the function $\mu_{0}$ is well defined and $\sigma$-additive on $\mathcal{U}(\mathcal{R}_{X\times Y})$. Hence by Caratheodory extension theorem, there exists a measure $\mu$ on the $\sigma$-algebra $\Sigma_{X}\otimes\Sigma_{Y}$ that extends $\mu_{0}$. Therefore, $\mu$ satisfies identity \eqref{E1} for each $A\in\Sigma_{X}$ and $B\in\Sigma_{Y}$. Moreover if $\mu_{X}$ and $\mu_{Y}$ are $\sigma$-finite, $\mu$ is the unique measure on $(X\times Y,\Sigma_{X}\otimes\Sigma_{Y})$ satisfying \eqref{E1}.
\par
Consider now a sequence $\{(\Omega_{i},\Sigma_{i},\mu_{i})\}_{i\in\mathbb{N}}$ of measure spaces. The problem now is to generalize the classical product measure construction to countable many, i.e., to construct the infinite product measure space
$\left(\bigtimes_{i\in\mathbb{N}}\Omega_{i},\bigotimes_{i\in\mathbb{N}}\Sigma_{i},\bigotimes_{i\in\mathbb{N}}\mu_{i}\right)$ where $\bigotimes_{i\in\mathbb{N}}\Sigma_{i}$ is the $\sigma$-algebra generated by the cylinder sets
\begin{equation*}
\mathcal{C}(\Sigma_{i})_{i\in\mathbb{N}}:=\left\{\bigtimes_{i=1}^{m}C_{i}\times\bigtimes_{i=m+1}^{\infty}\Omega_{i} :C_{i}\in\Sigma_{i}, \ \forall i\in\{1,2,...,m\} \text{ and } m\in\mathbb{N}\right\},
\end{equation*}
\noindent and $\bigotimes_{i\in\mathbb{N}}\mu_{i}$ is a measure on $\left(\bigtimes_{i\in\mathbb{N}}\Omega_{i},\bigotimes_{i\in\mathbb{N}}\Sigma_{i}\right)$ satisfying an analogue of identity \eqref{E1} for this general setting. For instance, if the finiteness condition $\prod_{i\in\mathbb{N}}\mu_{i}(\Omega_{i})\in [0,\infty)$ holds, the measure $\bigotimes_{i\in \mathbb{N}}\mu_{i}$ should satisfy the identity
\begin{equation}\label{E2}
\bigotimes_{i\in\mathbb{N}}\mu_{i}\left(\bigtimes_{i=1}^{m}C_{i}\times\bigtimes_{i=m+1}^{\infty}\Omega_{i}\right)=\prod_{i=1}^{m}\mu_{i}(C_{i})\cdot\prod_{i=m+1}^{\infty}\mu_{i}(\Omega_{i}),
\end{equation}
\noindent for each $\mathscr{C}=\bigtimes_{i=1}^{m}C_{i}\times\bigtimes_{i=m+1}^{\infty}\Omega_{i}\in\mathcal{C}(\Sigma_{i})_{i\in\mathbb{N}}$. Note that along this paper we use the following definition of infinite product for a sequence $(a_{i})_{i\in\mathbb{N}}\subset [0,\infty)$,
\begin{equation*}
\prod_{i\in\mathbb{N}}a_{i}:=\lim_{n\to\infty}\prod_{i=1}^{n}a_{i}.
\end{equation*}
The first attempt to address this problem was for the particular case of probability spaces. In 1933, A. Kolmogoroff proved in \cite{K2} the existence of a probability measure $\bigotimes_{i\in\mathbb{N}} m_{[0,1]}$ on the measurable space $\left([0,1]^{\mathbb{N}},\bigotimes_{i\in\mathbb{N}}\mathcal{B}_{[0,1]}\right)$, where $\mathcal{B}_{[0,1]}$ and $m_{[0,1]}$ are the Borel $\sigma$-algebra of $[0,1]$ and the Lebesgue measure of $[0,1]$ respectively, such that identity \eqref{E2} holds for every $\mathscr{C}\in \mathcal{C}(\mathcal{B}_{[0,1]})_{i\in\mathbb{N}}$.
		\noindent Kolmogoroff's proof was based on the compactness of the product space $[0,1]^{\mathbb{N}}$, provided by the Tychonoff result from 1930, \cite{T}. More general cases were discussed by Z. Lomnicki and S. Ulam in 1934 in the reference \cite{LU}.  In 1943, S. Kakutani generalized for general probability spaces the results of Kolmogoroff, Lomnicki and Ulam proving in \cite{K1} the following celebrated result.
		
		\begin{theorem}\label{KT}
			Given $\{(\Omega_{i},\Sigma_{i},\mu_{i})\}_{i\in\mathbb{N}}$ a family of probability spaces, there exists a unique probability measure $\bigotimes_{i\in\mathbb{N}}\mu_{i}$ on the measurable space $(\bigtimes_{i\in\mathbb{N}}\Omega_{i},\bigotimes_{i\in\mathbb{N}}\Sigma_{i})$ satisfying \eqref{E2} for every $\mathscr{C}\in \mathcal{C}(\Sigma_{i})_{i\in\mathbb{N}}$.
		\end{theorem}
		\noindent Kakutani's proof of this result has become standard in Probability and Measure Theory. The key tool of the proof relies on a result of E. Hopf (cf. \cite{H}, \cite[Theorem 3.2]{Y}). In 1996, S. Saeki gives in \cite{S1} a new proof of Theorem \ref{KT}, proving it in a more natural terms without the use of Hopf's result. It must be observed that if the measure spaces $\{(\Omega_{i},\Sigma_{i},\mu_{i})\}_{i\in\mathbb{N}}$ are not of probability but satisfy the finiteness condition $\prod_{i\in\mathbb{N}}\mu_{i}(\Omega_{j})\in[0,\infty)$, then normalizing each measure space, it can be also proven as a rather direct consequence of Theorem \ref{KT}, the existence of a unique measure on $(\bigtimes_{i\in\mathbb{N}}\Omega_{i},\bigotimes_{i\in \mathbb{N}}\Sigma_{i})$ satisfying identity \eqref{E2}.
		\par
	On other hand, if $\prod_{i\in\mathbb{N}}\mu_{i}(\Omega_{i})=\infty$, identity \eqref{E2} is no longer useful for the construction of infinite product measures since its value is always infinite. Nevertheless, we can ask the measure to verify the new identity
	\begin{equation}\label{E3}
	\bigotimes_{i\in \mathbb{N}}\mu_{i}\left(\bigtimes_{i\in\mathbb{N}}C_{i}\right)=\prod_{i\in\mathbb{N}}\mu_{i}(C_{i})
	\end{equation}
	\noindent for each $\bigtimes_{i\in\mathbb{N}}C_{i}\in\mathcal{F}(\Sigma_{i},\mu_{i})_{i\in\mathbb{N}}$, where $\mathcal{F}(\Sigma_{i},\mu_{i})_{i\in\mathbb{N}}$ is the set of \textit{finite rectangles} on $\bigtimes_{i\in\mathbb{N}}\Omega_{i}$ defined by
	\begin{equation*}
	\mathcal{F}(\Sigma_{i},\mu_{i})_{i\in\mathbb{N}}:=\left\{\bigtimes_{i\in\mathbb{N}}C_{i}\in \mathcal{R}(\Sigma_{i},\mu_{i})_{i\in\mathbb{N}} : \prod_{i\in\mathbb{N}}\mu_{i}(C_{i})\in[0,\infty) \right\},
	\end{equation*}
	\noindent where $\mathcal{R}(\Sigma_{i},\mu_{i})_{i\in\mathbb{N}}$ is the set of \textit{rectangles} defined by
	\begin{equation*}
	\mathcal{R}(\Sigma_{i},\mu_{i})_{i\in\mathbb{N}}:=\left\{\bigtimes_{i\in\mathbb{N}}C_{i} : C_{i}\in\Sigma_{i},\ \forall i\in\mathbb{N}\right\}.
	\end{equation*}
	Therefore, the natural extension of the classical theory to the non-finite case is the product measure space $\left(\bigtimes_{i\in\mathbb{N}}\Omega_{i},\bigotimes_{i\in\mathbb{N}}\Sigma_{i},\bigotimes_{i\in\mathbb{N}}\mu_{i}\right)$ where $\bigotimes_{i\in\mathbb{N}}\mu_{i}$ is a measure satisfying \eqref{E3}. 
	The construction of this measures had not been ignored in the last century and several attempts had been made trying to formalize it.
	\par
	 In 1963, E. O. Elliott and A. P. Morse published a paper \cite{EM} constructing this kind of product spaces through a reformulation of the classical infinite product called \textit{plus product}. Let $\mathfrak{a}=(a_{n})_{n\in\mathbb{N}}\subset [0,\infty]$ be a sequence of extended non-negative real numbers and $\mathscr{A}(\mathfrak{a})=\{n\in\mathbb{N}:a_{n}>1\}$. Elliott and Morse defined the \textit{plus product} of the sequence $(a_{n})_{n\in\mathbb{N}}$ by
\begin{equation*}
{\prod_{n\in\mathbb{N}}}^{+}a_{n}:=\prod_{n\in\mathscr{A}(\mathfrak{a})}a_{n}\cdot\prod_{n\notin\mathscr{A}(\mathfrak{a})}a_{n},
\end{equation*}
\noindent with the convention $0\cdot \infty=\infty \cdot 0=0$ and setting the value of the empty product to 1. The main purpose of defining this concept lies in the fact that the plus product exists for every sequence $(a_{n})_{n\in\mathbb{N}}$ of extended non-negative real numbers, which does not happen with the classic product, take for instance the sequence defined by

\begin{equation*}
a_{n}:=\left\{
\begin{array}{ll}
 2 & \text{if } n\equiv 0\mod 2 \\
\frac{1}{2} & \text{if } n\equiv 1\mod 2.
\end{array}
\right.
\end{equation*}

\noindent for each $n\in\mathbb{N}$. The classical and the plus product do not, in general, coincide, but if $(a_{n})_{n\in\mathbb{N}}\subset (0,\infty)$ and $\prod_{n\in\mathbb{N}}a_{n}$ exists, then $\prod_{n\in\mathbb{N}}^{+}a_{n}=\prod_{n\in\mathbb{N}}a_{n}$. We define the set of \textit{finite plus rectangles} by

		\begin{equation*}
		\mathcal{F}^{+}(\Sigma_{i},\mu_{i})_{i\in\mathbb{N}}:=\left\{\bigtimes_{i\in\mathbb{N}}C_{i}\in\mathcal{R}(\Sigma_{i},\mu_{i})_{i\in\mathbb{N}} : {\prod_{i\in\mathbb{N}}}^{+}\mu_{i}(C_{i})\in[0,\infty) \right\}.
		\end{equation*}
		Elliott and Morse proved that given a family of measure spaces $\{(\Omega_{i},\Sigma_{i},\mu_{i})\}_{i\in\mathbb{N}}$, there exists a measure $\lambda_{EM}$ on the measurable space $\left(\bigtimes_{i\in\mathbb{N}}\Omega_{i}, \bigotimes_{i\in\mathbb{N}} \Sigma_{i}\right)$
			satisfying for every $\mathscr{C}=\bigtimes_{i\in\mathbb{N}}C_{i}\in\mathcal{F}^{+}(\Sigma_{i},\mu_{i})_{i\in\mathbb{N}}$, the following identity 
			\begin{equation}\label{E4}
			\lambda_{EM}(\mathscr{C})={\prod_{i\in\mathbb{N}}}^{+}\mu_{i}(C_{i}).
			\end{equation}
			Using earlier observation, if a finite plus rectangle satisfies ${\prod}^{+}_{i}\mu_{i}(C_{i})\neq 0$ (which implies that $\mu_{i}(C_{i})>0$ for every $i\in\mathbb{N}$) and its classical product $\prod_{i}\mu_{i}(C_{i})$ exists, then its plus product must coincide with the classical product $\prod_{i}\mu_{i}(C_{i})$. However, if ${\prod}^{+}_{i}\mu_{i}(C_{i})= 0$, the products do not, in general, coincide.  In consequence, this result does not stablish the existence of the required product measure since there are substantial sequences satisfying ${\prod}^{+}_{i}\mu_{i}(C_{i})= 0$ with nonzero classical product, take for instance the sequence $a_{n}=\exp((-1)^{n+1}/n)$ for each $n\in\mathbb{N}$.
		\par
		In 2004, R. Baker, proved in \cite[Theorem I]{RB2} (see also \cite{RB}), the existence of the required product measure for the particular case in which the involved spaces $\Omega_{i}$ are locally compact metric spaces, $\Sigma_{i}$ is the Borel $\sigma$-algebra of $\Omega_{i}$ and the property $\mathcal{D}$ is satisfied.
		\begin{enumerate}
			\item[($\mathcal{D}$)] For every $i\in\mathbb{N}$ and $\delta>0$, there exists a sequence $\{A_{j}\}_{j\in\mathbb{N}}\subset\Sigma_{i}$ such that $\text{diam}(A_{j})<\delta$ for each $j\in\mathbb{N}$ and 
			$$\Omega_{i}=\bigcup_{j\in\mathbb{N}}A_{j}.$$
		\end{enumerate}
		He proved that given a sequence $\{(\Omega_{i},\Sigma_{i})\}_{i\in\mathbb{N}}$ satisfying the previous assumptions and a family of regular Borel measures $\{\mu_{i}\}_{i\in\mathbb{N}}$, each $\mu_{i}$ defined in the measurable space $(\Omega_{i},\Sigma_{i})$, there exists a measure $\lambda_{B}$ on the measurable space $(\bigtimes_{i\in\mathbb{N}}\Omega_{i}, \bigotimes_{i\in\mathbb{N}}\Sigma_{i})$ satisfying \eqref{E3} for every $\mathscr{C}\in\mathcal{F}(\Sigma_{i},\mu_{i})_{i\in\mathbb{N}}$.
        \par In 2005, P. A. Loeb and D. A. Ross gave in \cite[Theorem 1.1]{LR} another attempt to formalize the product measure via Nonstandard Analysis techniques and Loeb Measures \cite[\textsection 4]{A}. They established that given a sequence of Hausdorff topological spaces $\Omega_{i}$ and a corresponding sequence of regular Borel measure spaces $\{(\Omega_{i},\Sigma_{i},\mu_{i})\}_{i\in\mathbb{N}}$, there exists a measure $\lambda_{LR}$ on the measurable space $(\bigtimes_{i\in\mathbb{N}}\Omega_{i}, \bigotimes_{i\in\mathbb{N}}\Sigma_{i})$ such that identity \eqref{E3} holds for every $\bigtimes_{i\in\mathbb{N}}C_{i}\in \mathcal{F}(\Sigma_{i},\mu_{i})_{i\in\mathbb{N}}$, provided that each $C_{i}$ is compact in $\Omega_{i}$.
		\par Finally, in 2011 G. R. Pantsulaia presented in \cite[Theorem 3.10]{P1} probably the best generalization of product measures to countable many up to date. He proved the following.
		\begin{theorem}\label{PT}
			Let $\{(\Omega_{i},\Sigma_{i},\mu_{i})\}_{i\in\mathbb{N}}$ be a family of $\sigma$-finite measure spaces. Then there exists a measure $\lambda_{P}$ on $\left(\bigtimes_{i\in\mathbb{N}}\Omega_{i},\bigotimes_{i\in\mathbb{N}}\Sigma_{i}\right)$ satisfying identity \eqref{E3} for each $\mathscr{C}\in\mathcal{F}(\Sigma_{i},\mu_{i})_{i\in\mathbb{N}}$.
		\end{theorem} 
		This result provides a standard proof of the existence of the product measure without imposing any topological assumption over the involved spaces, in contrast with the constructions of Baker and Loeb \& Ross. However, the construction of Pantsulaia requires the measure spaces $(\Omega_{i},\Sigma_{i},\mu_{i})$ to be $\sigma$-finite for every $i\in\mathbb{N}$ and the constructed measure $\lambda_{P}$ is not the restriction of an outer measure. The last condition is not necessary but it is rather natural and facilitates several computations like Fubini theorem's proof.
		\par
		The aim of this article is to write an additional chapter to this story by constructing the product measure $\bigotimes_{i\in \mathbb{N}}\mu_{i}$ for any sequence of measure spaces $\{(\Omega_{i},\Sigma_{i},\mu_{i})\}_{i\in\mathbb{N}}$, without imposing any restriction on them. This establishes an analogue Kakutani's Theorem \ref{KT} for general product measure spaces, concluding with an affirmative answer the problem of the existence of product measures. The main result of this article is the following.
		\begin{theorem}\label{J}
			Let $\{(\Omega_{i},\Sigma_{i},\mu_{i})\}_{i\in\mathbb{N}}$ be a family of measure spaces. Then there exists a measure $\bigotimes_{i\in\mathbb{N}}\mu_{i}$ on the measurable space $(\bigtimes_{i\in\mathbb{N}}\Omega_{i},\bigotimes_{i\in\mathbb{N}}\Sigma_{i})$ satisfying identity \eqref{E3} for each $\mathscr{C}=\bigtimes_{i\in\mathbb{N}}C_{i}\in\mathcal{F}(\Sigma_{i},\mu_{i})_{i\in\mathbb{N}}$. That is, the identity
			\begin{equation*}
			\bigotimes_{i\in\mathbb{N}}\mu_{i}(\mathscr{C})=\prod_{i\in\mathbb{N}}\mu_{i}(C_{i}).
			\end{equation*}
		\end{theorem}
		Furthermore, as will be seen later, this measure is constructed in an elementary way using the Caratheodory extension theorem and greatly simplifies the proofs given by Baker, Loeb \& Ross and Pantsulaia.
		\par
		This paper is organized as follows. In section two, we give a result that states that the volume of finite rectangles $\mathcal{F}(\Sigma_{i},\mu_{i})_{i\in\mathbb{N}}$ behaves like the classical volume of cubes in finite dimensions. This result will be imperative for the construction of the required measure. In section three, we construct the product measure proving the main result of this article, Theorem \ref{J}.

\section{A Key Result}

In this section, we obtain the main tool that will be used to prove the existence of the required measure. This result states that the volume of finite rectangles behaves like the volume of finite dimensional rectangles. Let $\{(\Omega_{i},\Sigma_{i},\mu_{i})\}_{i\in\mathbb{N}}$ be an arbitrary family of measure spaces. We define the \textit{volume} map $\vol: \mathcal{F}(\Sigma_{i},\mu_{i})_{i\in\mathbb{N}}\to[0,\infty)$ by 
\begin{equation*}
\vol\left(\bigtimes_{i\in\mathbb{N}}C_{i}\right):=\prod_{i\in\mathbb{N}}\mu_{i}(C_{i}).
\end{equation*}
\noindent It will be proved that the $\vol$ map is a good choice for the extension of the classical notion of volume in finite dimensions. Recall that the notation $\uplus$ means that the union is a union of pairwise disjoint sets.

\begin{theorem}\label{T2}
	Let $\{(\Omega_{i},\Sigma_{i},\mu_{i})\}_{i\in\mathbb{N}}$ be a sequence of measure spaces, $\mathscr{C}\in \mathcal{F}(\Sigma_{i},\mu_{i})_{i\in\mathbb{N}}$ and $\{\mathscr{C}_{n}\}_{n\in\mathbb{N}}\subset \mathcal{F}(\Sigma_{i},\mu_{i})_{i\in\mathbb{N}}$. Then, the following statements hold:
	
	\begin{enumerate}
		\item If $\mathscr{C}=\biguplus_{n\in\mathbb{N}}\mathscr{C}_{n}$, then
		\begin{equation}\label{JC0}
		\vol(\mathscr{C})=\sum_{n\in\mathbb{N}}\vol(\mathscr{C}_{n}).
		\end{equation}
		\item If $\biguplus_{n\in\mathbb{N}}\mathscr{C}_{n}\subset\mathscr{C}$, then
		\begin{equation}\label{JC1}
		\sum_{n\in\mathbb{N}}\vol(\mathscr{C}_{n})\leq\vol(\mathscr{C}).
		\end{equation}
		\item If $\mathscr{C}\subset\bigcup_{n\in\mathbb{N}}\mathscr{C}_{n}$, then
		\begin{equation*}\label{JC2}
		\vol(\mathscr{C})\leq\sum_{n\in\mathbb{N}}\vol(\mathscr{C}_{n}).
		\end{equation*}
	\end{enumerate}
	
\end{theorem}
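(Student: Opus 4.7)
The plan is to reduce each of the three assertions to the probability-measure case provided by Kakutani's Theorem~\ref{KT}, via a \emph{normalisation} of $\mathscr{C}$ into a probability rectangle. Write $\mathscr{C}=\bigtimes_{i\in\mathbb{N}}\mathscr{C}_{i}$ and $\mathscr{C}_{n}=\bigtimes_{i\in\mathbb{N}}\mathscr{C}_{i}^{(n)}$. I first dispose of the degenerate case $\vol(\mathscr{C})=0$: for a nonempty subrectangle $\mathscr{C}_{n}\subset\mathscr{C}$ one has $\mathscr{C}_{i}^{(n)}\subset\mathscr{C}_{i}$ by projection onto each coordinate, and the elementary monotonicity $0\leq a_{i}\leq b_{i}\Rightarrow\prod_{i}a_{i}\leq\prod_{i}b_{i}$ (applied to partial products and passing to the limit) forces $\vol(\mathscr{C}_{n})=0$, which immediately gives (1) and (2); statement (3) is vacuous here since its right-hand side is nonnegative.

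In the main case $\vol(\mathscr{C})>0$, every $\mu_{i}(\mathscr{C}_{i})$ lies in $(0,+\infty)$, so I set $\nu_{i}:=\mu_{i}|_{\mathscr{C}_{i}}/\mu_{i}(\mathscr{C}_{i})$ to obtain probability spaces $(\mathscr{C}_{i},\Sigma_{i}\cap\mathscr{C}_{i},\nu_{i})$. Theorem~\ref{KT} then supplies the product probability measure $\bigotimes_{i\in\mathbb{N}}\nu_{i}$ on $\bigl(\mathscr{C},\bigotimes_{i\in\mathbb{N}}(\Sigma_{i}\cap\mathscr{C}_{i})\bigr)$. For any sequence $A_{i}\in\Sigma_{i}\cap\mathscr{C}_{i}$, continuity from above applied to the decreasing cylinders $\bigtimes_{i=1}^{m}A_{i}\times\bigtimes_{i>m}\mathscr{C}_{i}$ yields $\bigotimes_{i\in\mathbb{N}}\nu_{i}\bigl(\bigtimes_{i\in\mathbb{N}}A_{i}\bigr)=\prod_{i\in\mathbb{N}}\nu_{i}(A_{i})$, and multiplying this by $\vol(\mathscr{C})$ as a product of convergent infinite products produces the pivotal identity
\[
\bigotimes_{i\in\mathbb{N}}\nu_{i}\Bigl(\bigtimes_{i\in\mathbb{N}}A_{i}\Bigr)\cdot\vol(\mathscr{C})=\prod_{i\in\mathbb{N}}\mu_{i}(A_{i}),
\]
which transfers every $\vol$-statement about measurable rectangles inside $\mathscr{C}$ into a corresponding $\bigotimes_{i\in\mathbb{N}}\nu_{i}$-statement.

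Statement (1) then follows directly from countable additivity of $\bigotimes_{i\in\mathbb{N}}\nu_{i}$ applied to $\mathscr{C}=\biguplus_{n}\mathscr{C}_{n}$ (after discarding empty $\mathscr{C}_{n}$, so that $\mathscr{C}_{i}^{(n)}\subset\mathscr{C}_{i}$), multiplied through by $\vol(\mathscr{C})$; statement (2) is the parallel inequality $\sum_{n}\bigotimes_{i\in\mathbb{N}}\nu_{i}(\mathscr{C}_{n})\leq 1$. For (3), I replace $\mathscr{C}_{n}$ by $\mathscr{C}_{n}\cap\mathscr{C}=\bigtimes_{i\in\mathbb{N}}(\mathscr{C}_{i}^{(n)}\cap\mathscr{C}_{i})$, which remains in $\mathcal{F}(\Sigma_{i},\mu_{i})_{i\in\mathbb{N}}$ (verified through the pivotal identity) and satisfies $\vol(\mathscr{C}_{n}\cap\mathscr{C})\leq\vol(\mathscr{C}_{n})$; since the shrunk rectangles still cover $\mathscr{C}$, countable subadditivity of $\bigotimes_{i\in\mathbb{N}}\nu_{i}$ together with the pivotal identity closes the argument.

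The step demanding the most care is the analytic bookkeeping for classical infinite products, especially the distributivity $\prod_{i}(a_{i}b_{i})=\bigl(\prod_{i}a_{i}\bigr)\bigl(\prod_{i}b_{i}\bigr)$ and the monotonicity used throughout. Both reduce to elementary properties of limits of partial products of nonnegative reals, but the degenerate cases---factors collapsing to $0$, limits vanishing, or infinite products whose very existence must be established \emph{a posteriori} from the pivotal identity---have to be tracked separately so that every equality and inequality written above remains meaningful.
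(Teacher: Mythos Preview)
Your proof is correct and follows essentially the same strategy as the paper: restrict to the rectangle $\mathscr{C}$, invoke the existence of a product measure there via Kakutani, and read off (1)--(3) from countable additivity, monotonicity, and subadditivity of that measure. The only cosmetic difference is that the paper appeals directly to the finite case $\prod_{i}\mu_{i}(\mathscr{C}_{i})\in[0,+\infty)$ (stated earlier as a consequence of Theorem~\ref{KT}) rather than normalising to probabilities, which lets it treat $\vol(\mathscr{C})=0$ uniformly instead of as a separate degenerate case; your pivotal identity is exactly the paper's identity~\eqref{CT} after division by $\vol(\mathscr{C})$.
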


\noindent For the proof of Theorem \ref{T2}, it will be necessary to prove some lemmas.

\begin{lemma}\label{LL}
	Let $(a_{n})_{n\in\mathbb{N}}, (b_{n})_{n\in\mathbb{N}}$ be two sequences of non-negative real numbers such that
	\begin{enumerate}
		\item $a_{n}\leq b_{n}$ for each $n\in\mathbb{N}$,
		\item $\prod_{n\in\mathbb{N}}b_{n}\in[0,\infty)$.
	\end{enumerate}
	Then $\prod_{n\in\mathbb{N}}a_{n}$ is well defined,
	\begin{equation}\label{P1}
	\prod_{n\in\mathbb{N}}a_{n}\in[0,\infty),
	\end{equation}
	and
	\begin{equation}\label{INE}
	\prod_{n\in\mathbb{N}}a_{n}\leq\prod_{n\in\mathbb{N}}b_{n}.
	\end{equation}
\end{lemma}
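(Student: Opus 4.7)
The proof proceeds by splitting on whether the sequence $(b_{n})$ has any zero term, and relies on the factorization trick $a_{n} = c_{n} b_{n}$ with $c_{n} \in [0,1]$ to reduce to a monotone situation.

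\textbf{Case 1 (degenerate case).} Suppose $b_{n_{0}} = 0$ for some $n_{0}\in\mathbb{N}$. Since $0 \leq a_{n_{0}} \leq b_{n_{0}} = 0$, we get $a_{n_{0}} = 0$, so every partial product $A_{N} := \prod_{n=1}^{N} a_{n}$ vanishes for $N \geq n_{0}$, and likewise $B_{N} := \prod_{n=1}^{N} b_{n} = 0$ for $N \geq n_{0}$. Thus $\prod_{n\in\mathbb{N}} a_{n} = 0 = \prod_{n\in\mathbb{N}} b_{n}$, which proves both \eqref{P1} and \eqref{INE}.

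\textbf{Case 2 (nondegenerate case).} Suppose $b_{n} > 0$ for all $n\in\mathbb{N}$. Define
\begin{equation*}
c_{n} := \frac{a_{n}}{b_{n}} \in [0,1], \qquad C_{N} := \prod_{n=1}^{N} c_{n}.
\end{equation*}
The sequence $(C_{N})$ is non-increasing and bounded in $[0,1]$, so $C_{N} \to C \in [0,1]$. By hypothesis $B_{N} \to B \in [0,+\infty)$. Since $A_{N} = C_{N} B_{N}$, the limit $\lim_{N} A_{N} = CB$ exists and belongs to $[0,+\infty)$, which gives \eqref{P1}. Because $C \leq 1$, we have $CB \leq B$, which is precisely \eqref{INE}.

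There is no real obstacle here; the only thing to notice is that pulling out the ratios $c_{n} = a_{n}/b_{n}$ converts the nonmonotone sequence $(A_{N})$ into the product of a monotone bounded sequence and the already convergent sequence $(B_{N})$. Handling the degenerate case separately is needed only to ensure the quotients in Case~2 are well defined; it is otherwise trivial.
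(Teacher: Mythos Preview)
Your proof is correct and follows essentially the same approach as the paper: both arguments hinge on writing $a_{n}=c_{n}b_{n}$ with $c_{n}\in[0,1]$ so that the partial products $C_{N}$ are monotone and bounded, and then passing to the limit in $A_{N}=C_{N}B_{N}$. The only cosmetic difference is the case split---the paper separates out $\prod_{n}b_{n}=0$ (handled by squeezing) while you separate out $b_{n_{0}}=0$ for some $n_{0}$---but the substance is identical.
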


\begin{proof}
	If $\prod_{n\in\mathbb{N}}b_{n}=0$, the result is evident. Suppose $\prod_{n\in\mathbb{N}}b_{n}\in(0,\infty)$. Then, since $(a_{n}/b_{n})_{n\in\mathbb{N}}\subset [0,1]$, the partial products are monotone and bounded by $1$. In consequence  $\prod_{n\in\mathbb{N}}\frac{a_{n}}{b_{n}}\in[0,\infty)$. By using the elementary properties of the limit we obtain
	\begin{equation*}
	\lim_{m\to\infty}\prod_{n=1}^{m}a_{n}=\lim_{m\to\infty}\prod_{n=1}^{m}b_{n}\prod_{n=1}^{m}\frac{a_{n}}{b_{n}}=\prod_{n\in\mathbb{N}}b_{n}\prod_{n\in\mathbb{N}}\frac{a_{n}}{b_{n}}\in[0,\infty).
	\end{equation*}
	This concludes the proof of \eqref{P1}. For \eqref{INE} just note that the partial products satisfies $\prod_{n=1}^{m} a_{n}\leq\prod_{n=1}^{m} b_{n}$ for each $m\in\mathbb{N}$.
\end{proof}

\begin{lemma}\label{L4}
		Let $\mathscr{C}_{1}\in\mathcal{F}(\Sigma_{i},\mu_{i})_{i\in\mathbb{N}}$ and $\mathscr{C}_{2}\in\mathcal{R}(\Sigma_{i},\mu_{i})_{i\in\mathbb{N}}$. Then $\mathscr{C}_{1}\cap\mathscr{C}_{2}\in\mathcal{F}(\Sigma_{i},\mu_{i})_{i\in\mathbb{N}}$. Moreover, if $\mathscr{C}_{2}\in\mathcal{F}(\Sigma_{i},\mu_{i})_{i\in\mathbb{N}}$, for each $i\in\{1,2\}$,
		\begin{equation}\label{INE2}
		\vol(\mathscr{C}_{1}\cap\mathscr{C}_{2})\leq\vol(\mathscr{C}_{i}).
		\end{equation}
\end{lemma}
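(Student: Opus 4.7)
The plan is to reduce the statement to a componentwise application of Lemma \ref{LL}. Writing $\mathscr{C}_{1} = \bigtimes_{i\in\mathbb{N}} \mathscr{C}_{1,i}$ and $\mathscr{C}_{2} = \bigtimes_{i\in\mathbb{N}} \mathscr{C}_{2,i}$, the first observation is the set-theoretic identity
\begin{equation*}
\mathscr{C}_{1}\cap\mathscr{C}_{2} = \bigtimes_{i\in\mathbb{N}}\bigl(\mathscr{C}_{1,i}\cap\mathscr{C}_{2,i}\bigr),
\end{equation*}
which holds because a sequence $(\omega_i)_{i\in\mathbb{N}}$ lies in both products if and only if $\omega_i\in\mathscr{C}_{1,i}\cap\mathscr{C}_{2,i}$ for every $i$. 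Since each $\Sigma_{i}$ is a $\sigma$-algebra, $\mathscr{C}_{1,i}\cap\mathscr{C}_{2,i}\in\Sigma_{i}$, so $\mathscr{C}_{1}\cap\mathscr{C}_{2}$ is already a rectangle with measurable sides; the only thing left to verify is the finiteness condition that puts it in $\mathcal{F}(\Sigma_{i},\mu_{i})_{i\in\mathbb{N}}$.

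To get that, I would set $a_{i}:=\mu_{i}(\mathscr{C}_{1,i}\cap\mathscr{C}_{2,i})$ and $b_{i}:=\mu_{i}(\mathscr{C}_{1,i})$. By monotonicity of $\mu_{i}$, $a_{i}\leq b_{i}$ for every $i$, and $\prod_{i}b_{i}=\vol(\mathscr{C}_{1})\in[0,+\infty)$ by hypothesis. Lemma \ref{LL} then yields simultaneously that $\prod_{i}a_{i}\in[0,+\infty)$ and $\prod_{i}a_{i}\leq\prod_{i}b_{i}$. The first conclusion says $\mathscr{C}_{1}\cap\mathscr{C}_{2}\in\mathcal{F}(\Sigma_{i},\mu_{i})_{i\in\mathbb{N}}$, and the second says $\vol(\mathscr{C}_{1}\cap\mathscr{C}_{2})\leq\vol(\mathscr{C}_{1})$. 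Repeating the argument with $b_{i}:=\mu_{i}(\mathscr{C}_{2,i})$ gives the inequality against $\vol(\mathscr{C}_{2})$, completing both cases of \eqref{INE2}.

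There is no substantive obstacle here; the only mild subtlety is the componentwise monotonicity $\mu_{i}(\mathscr{C}_{1,i}\cap\mathscr{C}_{2,i})\leq\mu_{i}(\mathscr{C}_{1,i})$, which is immediate from $\mathscr{C}_{1,i}\cap\mathscr{C}_{2,i}\subset\mathscr{C}_{1,i}$, and the need to invoke Lemma \ref{LL} rather than the manipulating infinite products directly, so that we are sure the product of the $a_{i}$'s actually converges in $[0,+\infty)$ before writing the inequality \eqref{INE2}.
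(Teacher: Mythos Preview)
Your argument is correct and matches the paper's proof essentially line for line: both write the intersection componentwise, use that each $\Sigma_i$ is closed under intersection, and then apply Lemma~\ref{LL} with $a_i=\mu_i(\mathscr{C}_{1,i}\cap\mathscr{C}_{2,i})$ and $b_i=\mu_i(\mathscr{C}_{1,i})$ to obtain simultaneously the finiteness of the product and the inequality~\eqref{INE2}. The only cosmetic difference is that you spell out the symmetric case $b_i=\mu_i(\mathscr{C}_{2,i})$ explicitly, whereas the paper simply remarks that~\eqref{INE2} follows from~\eqref{INE}.
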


\begin{proof}
	Firstly, let us denote
	\begin{equation*}
	\mathscr{C}_{1}=\bigtimes_{i\in\mathbb{N}}C_{1}^{i} \quad \text{ and } \quad \mathscr{C}_{2}=\bigtimes_{i\in\mathbb{N}}C_{2}^{i}.
	\end{equation*}
	\noindent It is apparent that
	\begin{equation*}
	\mathscr{C}_{1}\cap\mathscr{C}_{2}=\bigtimes_{i\in\mathbb{N}}C_{1}^{i}\cap C_{2}^{i}.
	\end{equation*}
	\noindent Since $\Sigma_{i}$ is a $\sigma$-algebra, $C_{1}^{i}\cap C_{2}^{i}\in\Sigma_{i}$ for each $i\in\mathbb{N}$. Moreover, since $\mu_{i}(C_{1}^{i}\cap C_{2}^{i})\leq \mu_{i}(C_{1}^{i})$ for each $i\in\mathbb{N}$ and $\prod_{i\in\mathbb{N}}\mu_{i}(C_{1}^{i})=\vol(\mathscr{C}_{1})\in[0,\infty)$, it follows from Lemma \ref{LL}, that
	\begin{equation*}
	\prod_{i\in\mathbb{N}}\mu_{i}(C_{1}^{i}\cap C_{2}^{i})\in[0,\infty).
	\end{equation*}
	\noindent Thus $\mathscr{C}_{1}\cap\mathscr{C}_{2}\in\mathcal{F}(\Sigma_{i},\mu_{i})_{i\in\mathbb{N}}$. Inequality \eqref{INE2} follows from \eqref{INE}.
\end{proof}

\begin{lemma}\label{LLL}
	Let $\mathscr{C}=\bigtimes_{i\in\mathbb{N}}C_{i}\in\mathcal{F}(\Sigma_{i},\mu_{i})_{i\in\mathbb{N}}$. For each $i\in\mathbb{N}$, let us denote $(C_{i},\Sigma_{C_{i}},\mu_{C_{i}})$ the restriction of the measure space $(\Omega_{i},\Sigma_{i},\mu_{i})$ to $C_{i}$. Then, the following statements hold 
	
	\begin{enumerate}
		\item $\mathcal{F}(\Sigma_{C_{i}},\mu_{C_{i}})_{i\in\mathbb{N}}\subset\mathcal{F}(\Sigma_{i},\mu_{i})_{i\in\mathbb{N}}$.
		\item If $\mathscr{D}\in\mathcal{F}(\Sigma_{C_{i}},\mu_{C_{i}})_{i\in\mathbb{N}}$ and $\mathscr{E}\in\mathcal{F}(\Sigma_{i},\mu_{i})_{i\in\mathbb{N}}$, then $$\mathscr{D}\cap\mathscr{E}\in \mathcal{F}(\Sigma_{C_{i}},\mu_{C_{i}})_{i\in\mathbb{N}}$$ 
		In particular, if $\mathscr{E}\in\mathcal{F}(\Sigma_{i},\mu_{i})_{i\in\mathbb{N}}$ and $\mathscr{E}\subset\mathscr{C}$, necessarily $\mathscr{E}\in\mathcal{F}(\Sigma_{C_{i}},\mu_{C_{i}})_{i\in\mathbb{N}}$.
		\item There exists a measure $\bigotimes_{i\in\mathbb{N}}\mu_{C_{i}}$ on the measurable space $(\mathscr{C},\bigotimes_{i\in\mathbb{N}}\Sigma_{C_{i}})$ satisfying for each $\mathscr{D}\in\mathcal{F}(\Sigma_{C_{i}},\mu_{C_{i}})_{i\in\mathbb{N}}$ the identity 
		\begin{equation}\label{CT}
		\bigotimes_{i\in \mathbb{N}}\mu_{C_{i}}\left(\mathscr{D}\right)=\vol\left(\mathscr{D}\right).
		\end{equation}
	\end{enumerate}
\end{lemma}

\begin{proof}
	Let $\mathscr{D}=\bigtimes_{i\in\mathbb{N}}D_{i}\in\mathcal{F}(\Sigma_{C_{i}},\mu_{C_{i}})_{i\in\mathbb{N}}$. Then $D_{i}\in\Sigma_{C_{i}}\subset\Sigma_{i}$ for each $i\in\mathbb{N}$ and
	\begin{equation*}
	\prod_{i\in\mathbb{N}}\mu_{i}(D_{i})=\prod_{i\in\mathbb{N}}\mu_{C_{i}}(D_{i})\in [0,\infty).
	\end{equation*}
	Therefore $\mathscr{D}\in\mathcal{F}(\Sigma_{i},\mu_{i})_{i\in\mathbb{N}}$. This proves the first statement. For the second one,  let us denote
	\begin{equation*}
	\mathscr{D}=\bigtimes_{i\in\mathbb{N}}D_{i}\in\mathcal{F}(\Sigma_{C_{i}},\mu_{C_{i}})_{i\in\mathbb{N}} \quad \text{and} \quad  \mathscr{E}=\bigtimes_{i\in\mathbb{N}}E_{i}\in\mathcal{F}(\Sigma_{i},\mu_{i})_{i\in\mathbb{N}}.
	\end{equation*}
	Then for each $i\in\mathbb{N}$, $D_{i}\cap E_{i}\in \Sigma_{C_{i}}$ since $D_{i}\in\Sigma_{C_{i}}$ and $D_{i}\cap \Sigma_{i}\subset C_{i}\cap \Sigma_{i}=\Sigma_{C_{i}}$, where we are using the notation
	\begin{equation*}
	A\cap \Sigma_{i}:=\{ A\cap B: B\in\Sigma_{i}\}
	\end{equation*}
	for $A\in\{D_{i},C_{i}\}$. On the other hand, since $\mathscr{D},\mathscr{E}\in\mathcal{F}(\Sigma_{i},\mu_{i})_{i\in\mathbb{N}}$, by Lemma \ref{L4}, $\mathscr{D}\cap\mathscr{E}\in\mathcal{F}(\Sigma_{i},\mu_{i})_{i\in\mathbb{N}}$ and
	\begin{equation*}
	\prod_{i\in\mathbb{N}}\mu_{C_{i}}(D_{i}\cap E_{i})=\prod_{i\in\mathbb{N}}\mu_{i}(D_{i}\cap E_{i})\in[0,\infty).
	\end{equation*}
	Therefore $\mathscr{D}\cap\mathscr{E}\in\mathcal{F}(\Sigma_{C_{i}},\mu_{C_{i}})_{i\in\mathbb{N}}$. This proves the second statement. Since the sequence of measure spaces $\{(C_{i},\Sigma_{C_{i}},\mu_{C_{i}})\}_{i\in\mathbb{N}}$ satisfies the finiteness condition 
	$$\prod_{i\in\mathbb{N}}\mu_{C_{i}}(C_{i})=\prod_{i\in\mathbb{N}}\mu_{i}(C_{i})\in[0,\infty),$$
	by a discussion after Theorem \ref{KT}, there exists a measure $\bigotimes_{i\in\mathbb{N}}\mu_{C_{i}}$ on the measurable space $(\mathscr{C},\bigotimes_{i\in\mathbb{N}}\Sigma_{C_{i}})$ satisfying for each $\mathscr{D}=\bigtimes_{i\in\mathbb{N}}D_{i}\in\mathcal{F}(\Sigma_{C_{i}},\mu_{C_{i}})_{i\in\mathbb{N}}$ the identity
	\begin{equation}\label{E7}
	\bigotimes_{i\in \mathbb{N}}\mu_{C_{i}}\left(\mathscr{D}\right)=\prod_{i\in\mathbb{N}}\mu_{C_{i}}(D_{i})=\prod_{i\in\mathbb{N}}\mu_{i}(D_{i})=\vol\left(\mathscr{D}\right).
	\end{equation}
	It should be noted that the set map $\vol$ is well defined over $\mathcal{F}(\Sigma_{C_{i}},\mu_{C_{i}})_{i\in\mathbb{N}}$ since $$\mathcal{F}(\Sigma_{C_{i}},\mu_{C_{i}})_{i\in\mathbb{N}}\subset\mathcal{F}(\Sigma_{i},\mu_{i})_{i\in\mathbb{N}},$$ which follows from item (1). This proves the third statement and finishes the proof.
\end{proof}

\begin{proof}[Proof of Theorem 2.1]
	Let us begin by proving items $(1)$ and $(2)$. In these cases, $\mathscr{C}_{n}\subset\mathscr{C}$ for each $n\in\mathbb{N}$ and hence by item 2 of Lemma \ref{LLL}, $\{\mathscr{C}_{n}\}_{n\in\mathbb{N}}\subset \mathcal{F}(\Sigma_{C_{i}},\mu_{C_{i}})$, where
	\begin{equation*}
	\mathscr{C}=\bigtimes_{i\in\mathbb{N}}C_{i}.
	\end{equation*} 
	By item (3) of Lemma \ref{LLL}, there exists a measure $\bigotimes_{i\in \mathbb{N}}\mu_{C_{i}}$ on $(\mathscr{C},\bigotimes_{i\in\mathbb{N}}\Sigma_{C_{i}})$ which coincides with the map \textbf{vol} over $\mathcal{F}(\Sigma_{C_{i}},\mu_{C_{i}})_{i\in\mathbb{N}}$. By the basic properties of the measures, we deduce identities \eqref{JC0} and \eqref{JC1}. Finally, suppose the hypothesis of case $(3)$ holds. It is apparent that
	\begin{equation}\label{IN}
	\mathscr{C}=\bigcup_{n\in\mathbb{N}}\mathscr{C}\cap\mathscr{C}_{n}\subset\bigcup_{n\in\mathbb{N}}\mathscr{C}_{n}.
	\end{equation}
	By Lemma \ref{L4}, $\mathscr{C}\cap\mathscr{C}_{n}\in\mathcal{F}(\Sigma_{i},\mu_{i})_{i\in\mathbb{N}}$ and $\vol(\mathscr{C}\cap\mathscr{C}_{n})\leq\vol(\mathscr{C}_{n})$ for each $n\in\mathbb{N}$. Therefore, by using again the measure of Lemma \ref{LLL}, item (3), for $\mathscr{C}$ and identity \eqref{IN}, we deduce 
	\begin{align*}
	\vol(\mathscr{C})&=\bigotimes_{i\in\mathbb{N}}\mu_{C_{i}}(\mathscr{C})=\bigotimes_{i\in\mathbb{N}}\mu_{C_{i}}\left(\bigcup_{n\in\mathbb{N}}\mathscr{C}\cap\mathscr{C}_{n}\right)\\
	& \leq\sum_{n\in\mathbb{N}}\bigotimes_{i\in\mathbb{N}}\mu_{C_{i}}(\mathscr{C}\cap\mathscr{C}_{n})
	=\sum_{n\in\mathbb{N}}\vol(\mathscr{C}\cap\mathscr{C}_{n})\leq\sum_{n\in\mathbb{N}}\vol(\mathscr{C}_{n}).
	\end{align*}
	This concludes the proof.
\end{proof}

\section{Construction of the Measure}

 For the proof of the existence of a product measure for an arbitrary family of measure spaces $\{(\Omega_{i},\Sigma_{i},\mu_{i})\}_{i\in\mathbb{N}}$, we consider the outer measure $\mu^{\ast}:\mathcal{P}(\bigtimes_{i\in\mathbb{N}}\Omega_{i})\to [0,\infty]$, defined by
\begin{equation}\label{OM}
\mu^{\ast}(A):=\text{inf}\left\{ \sum_{n\in\mathbb{N}}\vol(\mathscr{C}_{n}) : \{\mathscr{C}_{n}\}_{n\in\mathbb{N}}\subset\mathcal{F}(\Sigma_{i},\mu_{i})_{i\in\mathbb{N}} \text{ and } A\subset\bigcup_{n\in\mathbb{N}}\mathscr{C}_{n} \right\}
\end{equation}
\noindent for every $A\in\mathcal{P}(\bigtimes_{i\in\mathbb{N}}\Omega_{i})$, where we set $\inf \emptyset =\infty$. Along this article, the notation $\mathcal{P}(A)$ stands for the power set of a set $A$. It is straightforward to prove that $\mu^{\ast}$ defines an outer measure, see e.g. \cite[Chapter 1, Theorem 4]{R}. We will prove that this outer measure is, in fact, a measure on the $\sigma$-algebra $\bigotimes_{i\in \mathbb{N}}\Sigma_{i}$ and it satisfies identity \eqref{E3} for each $\mathscr{C}\in\mathcal{F}(\Sigma_{i},\mu_{i})_{i\in\mathbb{N}}$. We will make use of the following set theoretic lemma whose proof is straightforward.

\begin{lemma}\label{L5}
	Let $\mathscr{C}=\bigtimes_{i\in\mathbb{N}}C_{i}\in\mathcal{R}(\Sigma_{i},\mu_{i})$. Then
	\begin{equation}\label{E5}
	\mathscr{C}^{c}=\biguplus_{n\in\mathbb{N}}\left(\bigtimes_{i=1}^{n-1}C_{i}\times C_{n}^{c}\times\bigtimes_{i=n+1}^{\infty}\Omega_{i}\right).
	\end{equation}
\end{lemma}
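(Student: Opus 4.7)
The plan is to use the standard \emph{first index of failure} decomposition: an element $x=(x_i)_{i\in\mathbb{N}}$ fails to lie in $\mathscr{C}$ if and only if there exists some $i$ with $x_i\notin\mathscr{C}_i$, and by the well-ordering of $\mathbb{N}$ there is a smallest such index, which assigns each point in $\mathscr{C}^c$ to exactly one set in the proposed union.

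First I would verify the inclusion $\supseteq$: for each $n\in\mathbb{N}$, any $x\in\bigtimes_{i=1}^{n-1}\mathscr{C}_i\times\mathscr{C}_n^c\times\bigtimes_{i=n+1}^\infty\Omega_i$ has $x_n\notin\mathscr{C}_n$, so $x\notin\mathscr{C}$. Next, for the inclusion $\subseteq$, given $x\in\mathscr{C}^c$, the set $\{i\in\mathbb{N}:x_i\notin\mathscr{C}_i\}$ is a nonempty subset of $\mathbb{N}$, so it admits a minimum $n$. By minimality, $x_i\in\mathscr{C}_i$ for all $i<n$, while $x_n\in\mathscr{C}_n^c$ and $x_i\in\Omega_i$ trivially for $i>n$; hence $x$ lies in the $n$-th summand.

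Finally, I would check pairwise disjointness: if $x$ belonged to both the $n$-th and the $m$-th summand with $n<m$, then the $m$-th summand would force $x_n\in\mathscr{C}_n$ (since $n<m$ falls in the $\mathscr{C}_i$ range for that term), while the $n$-th summand forces $x_n\in\mathscr{C}_n^c$, a contradiction. This justifies replacing the union symbol by the disjoint union $\biguplus$ in \eqref{E5}.

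There is no genuine obstacle here; the statement is essentially combinatorial set theory on the coordinates, and the only subtlety to watch out for is the edge case $n=1$, where the product $\bigtimes_{i=1}^{0}\mathscr{C}_i$ should be interpreted as the empty product (a singleton), so that the first summand is simply $\mathscr{C}_1^c\times\bigtimes_{i=2}^\infty\Omega_i$, consistent with the rest of the decomposition.
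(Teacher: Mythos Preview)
Your proof is correct and follows essentially the same approach as the paper: both arguments use the ``first index of failure'' idea, taking the minimum of $\{i\in\mathbb{N}:x_i\notin\mathscr{C}_i\}$ to place each $x\in\mathscr{C}^c$ into a unique summand. Your treatment is in fact slightly more detailed, since you spell out the disjointness argument and the $n=1$ edge case explicitly, whereas the paper simply declares disjointness to be clear.
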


\begin{theorem}[\textbf{Measurability}]\label{T2.7}
	Every set in $\mathcal{C}(\Sigma_{i})_{i\in\mathbb{N}}$ is $\mu^{\ast}$-measurable.
\end{theorem}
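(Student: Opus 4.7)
The plan is to verify directly the Carathéodory condition: for a fixed $\mathscr{C}=\bigtimes_{i=1}^{m}\mathscr{C}_{i}\times\bigtimes_{i=m+1}^{\infty}\Omega_{i}\in\mathcal{R}(\Sigma_{i})_{i\in\mathbb{N}}$ and an arbitrary $A\subset\bigtimes_{i\in\mathbb{N}}\Omega_{i}$, to establish
$$\mu^{\ast}(A)\geq \mu^{\ast}(A\cap\mathscr{C})+\mu^{\ast}(A\cap\mathscr{C}^{c}).$$
The reverse inequality is automatic from countable subadditivity and the case $\mu^{\ast}(A)=+\infty$ is trivial, so I would fix $\varepsilon>0$ together with a cover $\{\mathscr{D}_{n}\}_{n\in\mathbb{N}}\subset\mathcal{F}(\Sigma_{i},\mu_{i})_{i\in\mathbb{N}}$ of $A$ with $\sum_{n}\vol(\mathscr{D}_{n})\leq\mu^{\ast}(A)+\varepsilon$, and aim to control both $\mu^{\ast}(A\cap\mathscr{C})$ and $\mu^{\ast}(A\cap\mathscr{C}^{c})$ by splitting each $\mathscr{D}_{n}$ along $\mathscr{C}$ and its complement.

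The crucial observation is that, since $\mathscr{C}_{i}=\Omega_{i}$ for every $i>m$, applying Lemma \ref{L5} to $\mathscr{C}$ collapses to a \emph{finite} disjoint union
$$\mathscr{C}^{c}=\biguplus_{k=1}^{m}\mathscr{C}^{(k)},\qquad \mathscr{C}^{(k)}:=\bigtimes_{i=1}^{k-1}\mathscr{C}_{i}\times\mathscr{C}_{k}^{c}\times\bigtimes_{i=k+1}^{\infty}\Omega_{i},$$
with every $\mathscr{C}^{(k)}\in\mathcal{R}(\Sigma_{i})_{i\in\mathbb{N}}$. Lemma \ref{L10} then places $\mathscr{D}_{n}\cap\mathscr{C}$ and each $\mathscr{D}_{n}\cap\mathscr{C}^{(k)}$ in $\mathcal{F}(\Sigma_{i},\mu_{i})_{i\in\mathbb{N}}$, and these $m+1$ sets partition $\mathscr{D}_{n}$ pairwise disjointly. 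Theorem \ref{T2}(1) applied to this finite partition yields
$$\vol(\mathscr{D}_{n})=\vol(\mathscr{D}_{n}\cap\mathscr{C})+\sum_{k=1}^{m}\vol(\mathscr{D}_{n}\cap\mathscr{C}^{(k)}).$$

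Summing over $n$ and noting that $\{\mathscr{D}_{n}\cap\mathscr{C}\}_{n}$ is a cover of $A\cap\mathscr{C}$ while the countable family $\{\mathscr{D}_{n}\cap\mathscr{C}^{(k)}\}_{n,k}$ covers $A\cap\mathscr{C}^{c}$, both by elements of $\mathcal{F}(\Sigma_{i},\mu_{i})_{i\in\mathbb{N}}$, the definition of $\mu^{\ast}$ immediately gives
$$\mu^{\ast}(A\cap\mathscr{C})+\mu^{\ast}(A\cap\mathscr{C}^{c})\leq \sum_{n}\vol(\mathscr{D}_{n})\leq \mu^{\ast}(A)+\varepsilon,$$
and letting $\varepsilon\downarrow 0$ closes the argument. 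The only delicate point is the decomposition of $\mathscr{C}^{c}$: it is essential that elements of $\mathcal{R}(\Sigma_{i})_{i\in\mathbb{N}}$ are cylinder sets, so the complement is a \emph{finite}, rather than genuinely countable, disjoint union of cylinders; this is precisely why one establishes measurability at the level of $\mathcal{R}(\Sigma_{i})_{i\in\mathbb{N}}$ here and inherits measurability on the generated $\sigma$-algebra $\bigotimes_{i\in\mathbb{N}}\Sigma_{i}$ as a corollary, rather than attempting a direct attack on a general rectangle in $\mathcal{F}(\Sigma_{i},\mu_{i})_{i\in\mathbb{N}}$.
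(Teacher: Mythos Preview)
Your proof is correct and follows essentially the same route as the paper's: fix a cylinder $\mathscr{C}\in\mathcal{R}(\Sigma_{i})_{i\in\mathbb{N}}$, take an $\varepsilon$-efficient cover of the test set by finite rectangles, decompose $\mathscr{C}^{c}$ via Lemma~\ref{L5}, use Lemma~\ref{L10} and Theorem~\ref{T2}(1) to split the volume of each covering rectangle, and sum. The only cosmetic difference is that you explicitly note the decomposition of $\mathscr{C}^{c}$ is a \emph{finite} union (the terms with $k>m$ vanish since $\mathscr{C}_{k}^{c}=\Omega_{k}^{c}=\varnothing$), whereas the paper writes the union over all of $\mathbb{N}$ and silently carries the empty tail; this is not a substantive difference.
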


\begin{proof}
	Take $\mathscr{C}\in\mathcal{C}(\Sigma_{i})_{i\in\mathbb{N}}$ and $B\in\mathcal{P}(\bigtimes_{i\in\mathbb{N}}\Omega_{i})$. We have to prove that the inequality 	\begin{equation}\label{S}
	\mu^{\ast}(B)\geq \mu^{\ast}(B\cap\mathscr{C})+\mu^{\ast}(B\cap\mathscr{C}^{c})
	\end{equation}
	holds. If $\mu^{\ast}(B)=\infty$, then condition \eqref{S} holds. Hence assume that $\mu^{\ast}(B)<\infty$. Take any $\varepsilon>0$ and a family $\{\mathscr{B}_{n}\}_{n\in\mathbb{N}}\subset\mathcal{F}(\Sigma_{i},\mu_{i})_{i\in\mathbb{N}}$ such that
	\begin{equation*}
	B\subset\bigcup_{n\in\mathbb{N}}\mathscr{B}_{n}
	\end{equation*}
	\noindent and
	\begin{equation}\label{EQ9}
	\sum_{n\in\mathbb{N}}\vol(\mathscr{B}_{n})\leq \mu^{\ast}(B)+\varepsilon.
	\end{equation}
	\noindent Denoting $\mathscr{C}=\bigtimes_{j=1}^{m}C_{j}\times\bigtimes_{j=m+1}^{\infty}\Omega_{j}$, by Lemma \ref{L5}, we deduce that
	\begin{equation}
	\mathscr{C}^{c}=\biguplus_{i=1}^{m}\mathscr{C}_{i}, \quad \mathscr{C}_{i}:=\left(\bigtimes_{j=1}^{i-1}C_{j}\times C_{i}^{c}\times\bigtimes_{j=i+1}^{\infty}\Omega_{j}\right).
	\end{equation}
	As a direct consequence, we can decompose
	\begin{align*}
	\mathscr{B}_{n}&=(\mathscr{B}_{n}\cap\mathscr{C})\uplus(\mathscr{B}_{n}\cap\mathscr{C}^{c}) =(\mathscr{B}_{n}\cap\mathscr{C})\uplus\left(\biguplus_{i=1}^{m}(\mathscr{B}_{n}\cap \mathscr{C}_{i})\right),
	\end{align*}
	where $\{\mathscr{C}_{i}\}_{i\in\mathbb{N}}\subset\mathcal{C}(\Sigma_{i})_{i\in\mathbb{N}}$.
	By Lemma \ref{L4}, it follows that $\mathscr{B}_{n}\cap\mathscr{C},\mathscr{B}_{n}\cap\mathscr{C}_{i}\in\mathcal{F}(\Sigma_{i},\mu_{i})_{i\in\mathbb{N}}$ and therefore, by Theorem \ref{T2} item (1), we arrive to the identity
	\begin{equation*}
	\vol(\mathscr{B}_{n})=\vol(\mathscr{B}_{n}\cap\mathscr{C})+\sum_{i=1}^{m}\vol(\mathscr{B}_{n}\cap \mathscr{C}_{i}).
	\end{equation*}
	Hence, by equation \eqref{EQ9} and the definition of the outer measure $\mu^{\ast}$, \eqref{OM}, it becomes apparent that
	\begin{align*}
	\mu^{\ast}(B)+\varepsilon\geq\sum_{n\in\mathbb{N}}\vol(\mathscr{B}_{n})& =\sum_{n\in\mathbb{N}}\vol(\mathscr{B}_{n}\cap\mathscr{C})+\sum_{n\in\mathbb{N}}\sum_{i=1}^{m}\vol(\mathscr{B}_{n}\cap \mathscr{C}_{i}) \\
	&\geq \mu^{\ast}(B\cap\mathscr{C})+\mu^{\ast}(B\cap\mathscr{C}^{c}).
	\end{align*}
	\noindent The last step follows from the inclusion
	\begin{align*}
	B\cap\mathscr{C}^{c}\subset\left(\bigcup_{n\in\mathbb{N}}\mathscr{B}_{n}\right)\cap\mathscr{C}^{c}=\bigcup_{n\in\mathbb{N}}(\mathscr{B}_{n}\cap\mathscr{C}^{c})=\bigcup_{n\in\mathbb{N}}\biguplus_{i=1}^{m}(\mathscr{B}_{n}\cap \mathscr{C}_{i}).
	\end{align*}
	\noindent Taking $\varepsilon\to 0$, inequality \eqref{S} holds for every $B\in\mathcal{P}(\bigtimes_{i\in\mathbb{N}}\Omega_{i})$ and therefore $\mathscr{C}$ is $\mu^{\ast}$-measurable.
\end{proof}

\noindent In Theorem \ref{T2.7} we have proved that $\mathcal{C}(\Sigma_{i})_{i\in\mathbb{N}}$ is a subfamily of the Caratheodory $\sigma$-algebra $\mathfrak{C}$ associated to the outer measure $\mu^{\ast}$, and therefore by Caratheodory extension theorem, $\mu^{\ast}$ defines a measure on $\sigma(\mathcal{C}(\Sigma_{i})_{i\in\mathbb{N}})=\bigotimes_{i\in\mathbb{N}} \Sigma_{i}$. We will denote 
\begin{equation*}
\bigotimes_{i\in\mathbb{N}}\mu_{i}:=\mu^{\ast}\big |_{\bigotimes_{i\in\mathbb{N}} \Sigma_{i}}.
\end{equation*}
Finally, we will prove that the outer measure $\mu^{\ast}$ satisfies identity \eqref{E3} for each $\mathscr{C}\in\mathcal{F}(\Sigma_{i},\mu_{i})_{i\in\mathbb{N}}$.

\begin{proposition}[\textbf{Volume}]\label{L14}
	For each $\mathscr{C}\in\mathcal{F}(\Sigma_{i},\mu_{i})_{i\in\mathbb{N}}$, the following identity holds
	\begin{equation*}
	\mu^{\ast}(\mathscr{C})=\vol(\mathscr{C}).
	\end{equation*}
\end{proposition}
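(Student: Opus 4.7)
The plan is to establish the equality by a pair of inequalities, both of which follow quickly from the machinery already set up, and in particular from Theorem \ref{T2}.

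For the upper bound $\mu^{\ast}(\mathscr{C})\leq \vol(\mathscr{C})$, I would exhibit a single admissible cover and appeal to the infimum in \eqref{OM}. Specifically, set $\mathscr{C}_{1}:=\mathscr{C}$ and $\mathscr{C}_{n}:=\bigtimes_{i\in\mathbb{N}}\varnothing$ for $n\geq 2$. Each $\mathscr{C}_{n}$ lies in $\mathcal{F}(\Sigma_{i},\mu_{i})_{i\in\mathbb{N}}$ because $\varnothing\in\Sigma_{i}$ and $\mu_{i}(\varnothing)=0$, so $\vol(\mathscr{C}_{n})=0$ for $n\geq 2$. Clearly $\mathscr{C}\subset\bigcup_{n\in\mathbb{N}}\mathscr{C}_{n}$, and $\sum_{n\in\mathbb{N}}\vol(\mathscr{C}_{n})=\vol(\mathscr{C})$, whence $\mu^{\ast}(\mathscr{C})\leq\vol(\mathscr{C})$.

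For the reverse inequality $\vol(\mathscr{C})\leq\mu^{\ast}(\mathscr{C})$, I would let $\{\mathscr{C}_{n}\}_{n\in\mathbb{N}}\subset\mathcal{F}(\Sigma_{i},\mu_{i})_{i\in\mathbb{N}}$ be an arbitrary cover of $\mathscr{C}$ (if no such cover exists the inequality is trivial, since the right side is $+\infty$). Then the hypotheses of Theorem \ref{T2} item (3) are exactly met: $\mathscr{C}\in\mathcal{F}(\Sigma_{i},\mu_{i})_{i\in\mathbb{N}}$, the $\mathscr{C}_{n}$ belong to the same family, and $\mathscr{C}\subset\bigcup_{n\in\mathbb{N}}\mathscr{C}_{n}$. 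That theorem delivers
\begin{equation*}
\vol(\mathscr{C})\leq\sum_{n\in\mathbb{N}}\vol(\mathscr{C}_{n}).
\end{equation*}
Taking the infimum over all such covers yields $\vol(\mathscr{C})\leq\mu^{\ast}(\mathscr{C})$.

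There is essentially no obstacle here; the proposition is the payoff of the subadditivity statement already proved in Theorem \ref{T2} (3), which was the substantive result. The only point requiring care is the technicality that the definition of $\mu^{\ast}$ uses sequences of covers rather than finite ones, but this is handled cleanly by padding with the empty rectangle, as above. Combining the two inequalities gives the claimed identity $\mu^{\ast}(\mathscr{C})=\vol(\mathscr{C})$.
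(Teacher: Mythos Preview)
Your proof is correct and follows essentially the same approach as the paper: both establish the two inequalities by (i) invoking Theorem~\ref{T2}(3) on an arbitrary cover to get $\vol(\mathscr{C})\leq\mu^{\ast}(\mathscr{C})$, and (ii) using the trivial cover $\{\mathscr{C},\varnothing,\varnothing,\dots\}$ to get $\mu^{\ast}(\mathscr{C})\leq\vol(\mathscr{C})$. The only cosmetic differences are the order of presentation and your explicit mention of the case where no cover exists.
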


\begin{proof}
	 Let $\{\mathscr{C}_{n}\}_{n\in\mathbb{N}}\subset\mathcal{F}(\Sigma_{i},\mu_{i})_{i\in\mathbb{N}}$ be a cover of $\mathscr{C}$, i.e.,
	\begin{equation*}
	\mathscr{C}\subset\bigcup_{n\in\mathbb{N}}\mathscr{C}_{n}.
	\end{equation*}
	\noindent By Theorem \ref{T2} item (3), we deduce the following inequality
	\begin{equation*}
	\vol(\mathscr{C})\leq\sum_{n\in\mathbb{N}}\vol(\mathscr{C}_{n}).
	\end{equation*}
	\noindent Then, taking the infimum over all such covers, we stablish that $\vol(\mathscr{C})\leq\mu^{\ast}(\mathscr{C})$. Finally, considering the particular cover $\{\mathscr{C}_{n}\}_{n\in\mathbb{N}}\subset\mathcal{F}(\Sigma_{i},\mu_{i})_{i\in\mathbb{N}}$ defined by
	\begin{equation*}
	\mathscr{C}_{n}:=\left\{
	\begin{array}{ll}
	\mathscr{C} & \text{ if } n= 1\\
	\emptyset &\text{ if } n\neq 1
	\end{array}\right.
	\end{equation*}
	\noindent it follows from the definition of $\mu^{\ast}$ that
	\begin{equation*}
	\mu^{\ast}(\mathscr{C})\leq\vol(\mathscr{C})\leq\mu^{\ast}(\mathscr{C})
	\end{equation*}
	\noindent which implies $\vol(\mathscr{C})=\mu^{\ast}(\mathscr{C})$. This finishes the proof.
\end{proof}

\noindent Therefore, $\bigotimes_{i\in\mathbb{N}}\mu_{i}$ is a measure on $\bigotimes_{i\in\mathbb{N}}\Sigma_{i}$ satisfying identity \eqref{E3}. In conclusion, we have proved the main result of this article, Theorem \ref{J}.
\vspace{10pt}
\par
\noindent 
\textbf{Acknowledgements.} The author expresses his deepest gratitude to the (anonymous) reviewer of this paper for his truly professional work.


\begin{thebibliography}{9}

\bibitem[1]{A}
\textsc{L. Arkeryd, N. Cutland, C.W. Henson}. 
Nonstandard analysis.
\emph{NATO Advanced Science Institutes Series C: Mathematical and Physical Sciences, vol. 493, Kluwer Academic Publishers Group, Dordrecht, (1997).}

\bibitem[2]{RB}
\textsc{R. Baker}. 
Lebesgue Measure on $\mathbb{R}^{\infty}$.
\emph{Procedings of the American Mathematical Society Volume 113 Number 4, pp. 1023-1029, (1991).}

\bibitem[3]{RB2}
\textsc{R. Baker}. 
Lebesgue Measure on $\mathbb{R}^{\infty}$, II.
\emph{Procedings of the American Mathematical Society Volume 132 Number 9, pp. 2577-2591, (2004).}

\bibitem[4]{B}
\textsc{V. I. Bogachev}. 
Measure Theory Vol I and II.
\emph{Springer-Verlag Berlin Heidelberg (2007).}

\bibitem[5]{EM}
\textsc{E. O. Elliot, A. P. Morse}. 
General Product Measures.
\emph{Transactions of the American Mathematical Society Volume 110, pp. 245–283, (1963).}

\bibitem[6]{F}
\textsc{G. B. Folland}. 
Real Analysis: Modern Thechniques and their Applications.
\emph{John Wiley \& Sons (1999).}

\bibitem[7]{H}
\textsc{E. Hopf}. 
Ergodentheorie.
\emph{Berlin, pp. 2, (1937).}

\bibitem[8]{K1}
\textsc{S. Kakutani}. 
Notes on Infinite Product Measure Spaces, I.
\emph{Proc. Imp. Acad. Volume 19 Number 3, pp. 148-151, (March 1943).}

\bibitem[9]{K2}
\textsc{A. Kolmogoroff}. 
Grundbegriffe der Wahrscheinlichkeitsrechnung.
\emph{Berlin (1933).}

\bibitem[10]{LR}
\textsc{P. Loeb, P. Ross}. 
Infinite Products of Infinite Measures.
\emph{Illinois Journal of Mathematics 1, pp. 153-158, (Spring, 2005).}

\bibitem[11]{LU}
\textsc{Z. Lomnicki, S. Ulam}.
Sur la theorie de la mesure dans les espaces combinatoires et son application au calcul des probabilites. I: Variables independantes.
\emph{Fund. Math. vol. 23, pp. 237-278, (1934).}

\bibitem[12]{P1}
\textsc{G. Pantsulaia}. 
On Ordinary and Standard Products of Infinite Family of $\sigma$-finite Measures and Some of Their Applications.
\emph{Acta Mathematica Sinica, English Series Volume 27 Number 3, pp. 477–496, (February 2011).}

\bibitem[13]{R}
\textsc{R. A. Rogers}.
Hausdorff Measures.
\emph{Cambridge University Press, (1970)}.

\bibitem[14]{S1}
\textsc{S. Saeki}. 
A Proof of the Existence of Infinite Product Probability Measures.
\emph{The American Mathematical Monthly Vol. 103, No. 8, pp. 682-683, (October, 1996).}

\bibitem[15]{T}
\textsc{A. N. Tychonoff}.
Über die topologische Erweiterung von Räumen.
\emph{Mathematische Annalen, 102 (1), pp. 544–561 (1930).}

\bibitem[16]{Y}
\textsc{Y. Yamasaki}. 
Measures on Infinite Dimensional Spaces.
\emph{World Scientific (1985).}
\end{thebibliography}
\end{document}